

\documentclass[preprint,12pt]{elsarticle}




\usepackage{amssymb}
\usepackage{amsmath}
\PassOptionsToPackage{table}{xcolor}

\usepackage{hyperref}
\usepackage{natbib}
\usepackage{epstopdf}
\usepackage{url}
\usepackage[titletoc]{appendix}
\usepackage{eurosym}
\usepackage{listings}
\usepackage{setspace}
\usepackage{float}
\usepackage{booktabs}
\usepackage{pdfpages}
\usepackage{array}
\usepackage{placeins}
\usepackage{amsthm} 
\usepackage{graphicx}
\usepackage{mathrsfs}

\theoremstyle{definition}

\hypersetup{
    colorlinks,
    citecolor=black,
    filecolor=black,
    linkcolor=black,
    urlcolor=black
}

\onehalfspacing



\journal{arxiv.org}

\begin{document}

\newtheorem{theorem}{Theorem}
\newtheorem{corollary}{Corollary}
\newtheorem{lemma}{Lemma}
\theoremstyle{definition}
\newtheorem{definition}{Definition}

\begin{frontmatter}



\title{Vieta Matrix and its Determinant}


\author[Ufuk]{Ufuk KAYA}

\ead[Ufuk]{mat-ufuk@hotmail.com}

\address[Ufuk]{Department of Mathematics, Faculty of Arts and Sciences, Bitlis Eren University, Bitlis, Turkey}

\begin{abstract}
In this paper, we define a matrix which we call Vieta matrix and calculate its determinant:
\[
\resizebox{1\hsize}{!}{$
\left(
\begin{array}{cccc}
1&1&\cdots&1\\
a_{2}+a_{3}+\cdots+a_{n}&a_{1}+a_{3}+\cdots+a_{n}&\cdots&a_{1}+a_{2}+\cdots+a_{n-1}\\
a_{2}a_{3}+a_{2}a_{4}+\cdots+a_{n-1}a_{n}&a_{1}a_{3}+a_{1}a_{4}+\cdots+a_{n-1}a_{n}&\cdots&a_{1}a_{2}+a_{1}a_{3}+\cdots+a_{n-2}a_{n-1}\\
\vdots&\vdots&\ddots&\vdots\\
a_{2}a_{3}\dots a_{n-1}a_{n}&a_{1}a_{3}\dots a_{n-1}a_{n}&\cdots&a_{1}a_{2}\dots a_{n-2}a_{n-1}
\end{array}
\right)$}
\]
\end{abstract}

\begin{keyword}
Vieta's formula \sep matrix \sep determinant

\MSC[2010]{15A15}




\end{keyword}

\end{frontmatter}


\section{Introduction}
\label{int}

The determinant of a square matrix can be calculated by Leibnitz formula or Laplace formula. Especially, the determinant of a $3\times 3$ matrix can be calculated by rule of Sarrus. Generally, the determinants of $3\times 3$ matrices can be easily calculated, but $4\times 4$ and more are so difficult. In some cases, the determinants of $n\times n$ matrices can be formulated, such as Vandermonde matrix:
\[
\left(
\begin{array}{ccccc}
1&1&1&\cdots&1\\
a_{1}&a_{2}&a_{3}&\cdots&a_{n}\\
a_{1}^{2}&a_{2}^{2}&a_{3}^{2}&\cdots&a_{n}^{2}\\
\vdots&\vdots&\vdots&\ddots&\vdots\\
a_{1}^{n-1}&a_{2}^{n-1}&a_{3}^{n-1}&\cdots&a_{n}^{n-1}
\end{array}
\right).
\]
The determinant of Vandermonde matrix is
\[
\prod\limits_{k>i}\left(a_{k}-a_{i}\right).
\]
In this paper, we define a matrix which we call Vieta matrix:
\begin{equation}\label{1}
\resizebox{1\hsize}{!}{$
\left(
\begin{array}{ccccc}
1&1&1&\cdots&1\\
a_{2}+a_{3}+\cdots+a_{n}&a_{1}+a_{3}+\cdots+a_{n}&a_{1}+a_{2}+\cdots+a_{n}&\cdots&a_{1}+a_{2}+\cdots+a_{n-1}\\
a_{2}a_{3}+a_{2}a_{4}+\cdots+a_{n-1}a_{n}&a_{1}a_{3}+a_{1}a_{4}+\cdots+a_{n-1}a_{n}&a_{1}a_{2}+a_{1}a_{4}+\cdots+a_{n-1}a_{n}&\cdots&a_{1}a_{2}+a_{1}a_{3}+\cdots+a_{n-2}a_{n-1}\\
\vdots&\vdots&\vdots&\ddots&\vdots\\
a_{2}a_{3}\dots a_{n-1}a_{n}&a_{1}a_{3}\dots a_{n-1}a_{n}&a_{1}a_{2}\dots a_{n-1}a_{n}&\cdots&a_{1}a_{2}\dots a_{n-2}a_{n-1}
\end{array}
\right)$}
\end{equation}
and we calculate its determinant. Also, we give some applications of it.

\section{The Determinant of Vieta Matrix}
\label{determinant}

\begin{definition}
We state that the matrix given by the formula (\ref{1}) is the Vieta matrix of the complex numbers $a_{1},a_{2},\dots,a_{n}$. It is equal to the following
\[
\left(
\begin{array}{cc}
1&1\\
a_{2}&a_{1}
\end{array}
\right)
\]
for $n=2$,
\[
\left(
\begin{array}{ccc}
1&1&1\\
a_{2}+a_{3}&a_{1}+a_{3}&a_{1}+a_{2}\\
a_{2}a_{3}&a_{1}a_{3}&a_{1}a_{2}
\end{array}
\right)
\]
for $n=3$ and
\[
\resizebox{1\hsize}{!}{$
\left(
\begin{array}{cccc}
1&1&1&1\\
a_{2}+a_{3}+a_{4}&a_{1}+a_{3}+a_{4}&a_{1}+a_{2}+a_{4}&a_{1}+a_{2}+a_{3}\\
a_{2}a_{3}+a_{2}a_{4}+a_{3}a_{4}&a_{1}a_{3}+a_{1}a_{4}+a_{3}a_{4}&a_{1}a_{2}+a_{1}a_{4}+a_{2}a_{4}&a_{1}a_{2}+a_{1}a_{3}+a_{2}a_{3}\\
a_{2}a_{3}a_{4}&a_{1}a_{3}a_{4}&a_{1}a_{2}a_{4}&a_{1}a_{2}a_{3}
\end{array}
\right)$}
\]
for $n=4$.
\end{definition}
\noindent
One can easily calculate that the determinants of the above matrices are
\[
a_{1}-a_{2},
\]
\[
\left(a_{1}-a_{2}\right)\left(a_{1}-a_{3}\right)\left(a_{2}-a_{3}\right),
\]
and
\[
\left(a_{1}-a_{2}\right)\left(a_{1}-a_{3}\right)\left(a_{1}-a_{4}\right)\left(a_{2}-a_{3}\right)\left(a_{2}-a_{4}\right)\left(a_{3}-a_{4}\right)
\]
respectively. Now we prove that the determinant of the Vieta matrix is
\begin{equation}\label{2}
\prod\limits_{1\le i<k\le n}\left(a_{i}-a_{k}\right)
\end{equation}
in the general case by the following theorem:
\begin{theorem}\label{theorem1}
The determinant of the Vieta matrix (\ref{1}) is equal to the product in the formula (\ref{2}).
\end{theorem}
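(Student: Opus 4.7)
The plan is to exploit the Vieta-type structure of the matrix: its $k$th column records, up to signs, the coefficients of the polynomial
\[
P_{k}(x) \;=\; \prod_{i \neq k}(x-a_{i}) \;=\; \sum_{j=0}^{n-1} (-1)^{j}\, e_{j}\bigl(\{a_{i}\}_{i \neq k}\bigr)\, x^{n-1-j},
\]
whose coefficients are, by Vieta's formulas, exactly the elementary symmetric polynomials that appear as entries of the matrix (\ref{1}).

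The key step will be to multiply the Vieta matrix on the left by a Vandermonde-type matrix so as to obtain a diagonal matrix. Let $A$ denote the sign-adjusted Vieta matrix with entries $A_{j,k} = (-1)^{j-1}\, e_{j-1}\bigl(\{a_{i}\}_{i \neq k}\bigr)$, and let $B$ be the matrix with entries $B_{\ell,j} = a_{\ell}^{\,n-j}$. A direct expansion then yields
\[
(BA)_{\ell,k} \;=\; \sum_{j=1}^{n} a_{\ell}^{\,n-j} (-1)^{j-1}\, e_{j-1}\bigl(\{a_{i}\}_{i \neq k}\bigr) \;=\; P_{k}(a_{\ell}),
\]
which equals $\prod_{i \neq k}(a_{k}-a_{i})$ when $\ell = k$ and vanishes otherwise, since $P_{k}(a_{\ell})$ then contains the factor $a_{\ell}-a_{\ell}$. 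Hence $BA$ is diagonal with explicit entries.

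Taking determinants, one obtains $\det(B)\det(A) = \prod_{k=1}^{n}\prod_{i \neq k}(a_{k}-a_{i})$. The factor $\det(B)$ is a Vandermonde determinant with columns reversed, and $\det(A)$ differs from $\det V$ only by the sign $(-1)^{n(n-1)/2}$ produced by the row rescalings. Pairing the two factors $(a_{k}-a_{i})$ and $(a_{i}-a_{k})$ arising from each unordered pair $\{i,k\}$ on the right-hand side and collecting all the signs isolates $\det V = \prod_{1 \le i<k \le n}(a_{i}-a_{k})$.

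The main obstacle is really just the sign bookkeeping, together with the degenerate case where some of the $a_{i}$ coincide: then $\det B = 0$ and the preceding argument cannot proceed by division. This last point is easily finessed by observing that both sides of the asserted identity are polynomials in $a_{1},\dots,a_{n}$ and agree on the dense open set of distinct tuples, and therefore agree identically.
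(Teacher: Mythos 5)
Your proof is correct, and it takes a genuinely different route from the paper. The paper argues by induction on $n$: it treats the $(n+1)$st number as a variable $x$, observes that the resulting Vieta determinant is a polynomial of degree at most $n$ in $x$ vanishing at $a_{1},\dots,a_{n}$, and pins down the leading constant by evaluating at $x=0$ (which forces it to dispose separately of the cases where some $a_{i}$ vanish, since it divides by $a_{1}a_{2}\cdots a_{n}$). Your argument is instead a one-shot matrix factorization: because column $k$ of the sign-adjusted matrix $A$ lists the coefficients of $P_{k}(x)=\prod_{i\neq k}(x-a_{i})$, left multiplication by the reversed Vandermonde matrix $B$ evaluates each $P_{k}$ at each $a_{\ell}$ and yields the diagonal matrix with entries $\prod_{i\neq k}(a_{k}-a_{i})$. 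The sign bookkeeping you defer does close up: $\det A=(-1)^{n(n-1)/2}\det V$, the column reversal gives $\det B=(-1)^{n(n-1)/2}\prod_{i<k}(a_{k}-a_{i})=\prod_{i<k}(a_{i}-a_{k})$, and pairing the ordered factors gives $\det(BA)=(-1)^{n(n-1)/2}\prod_{i<k}(a_{i}-a_{k})^{2}$, so cancelling $\prod_{i<k}(a_{i}-a_{k})$ leaves $\det V=\prod_{i<k}(a_{i}-a_{k})$ for distinct $a_{i}$; your density (or Zariski) argument then removes the distinctness hypothesis, which is legitimate since both sides are polynomials in $a_{1},\dots,a_{n}$. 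What your approach buys: it is non-inductive, it makes the structural link between the Vieta and Vandermonde determinants explicit rather than incidental, and it transfers verbatim to any infinite integral domain by the same polynomial-identity reasoning. What the paper's approach buys: it needs no auxiliary Vandermonde input and no division by the discriminant, at the price of an induction and a case split on vanishing $a_{i}$.
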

\begin{proof}
It is obvious that if at least two numbers of $a_{1},a_{2},\dots,a_{n}$ are equal to 0, then the last row of the matrix (\ref{1}) is the zero vector, and so, the determinant is 0. Assume that the numbers $a_{1},a_{2},\dots,a_{n}$ include at most one zero. We now use the induction method. For $n=2$, the determinant of the Vieta matrix
\[
\left(
\begin{array}{cc}
1&1\\
a_{2}&a_{1}
\end{array}
\right)
\]
is $a_{1}-a_{2}$. We assume that the assertion is valid for $n$, i.e. the determinant of the matrix (\ref{1}) is equal to the expression (\ref{2}) for any $n$ complex numbers. We show that the assertion is valid for $n+1$. First, we assume that $a_{n+1}=0$. Then, we have
\[
\resizebox{1\hsize}{!}{$
\left|
\begin{array}{ccccc}
1&1&\cdots&1&1\\
a_{2}+a_{3}+\cdots+a_{n}&a_{1}+a_{3}+\cdots+a_{n}&\cdots&a_{1}+a_{2}+\cdots+a_{n-1}&a_{1}+a_{2}+\cdots+a_{n}\\
a_{2}a_{3}+a_{2}a_{4}+\cdots+a_{n-1}a_{n}&a_{1}a_{3}+a_{1}a_{4}+\cdots+a_{n-1}a_{n}&\cdots&a_{1}a_{2}+a_{1}a_{3}+\cdots+a_{n-2}a_{n-1}&a_{1}a_{2}+a_{1}a_{3}+\cdots+a_{n-1}a_{n}\\
\vdots&\vdots&\ddots&\vdots&\vdots\\
0&0&\cdots&0&a_{1}a_{2}\dots a_{n-1}a_{n}
\end{array}
\right|$}
\]
\[
\resizebox{1\hsize}{!}{$
=a_{1}a_{2}\dots a_{n}\left|
\begin{array}{ccccc}
1&1&\cdots&1\\
a_{2}+a_{3}+\cdots+a_{n}&a_{1}+a_{3}+\cdots+a_{n}&\cdots&a_{1}+a_{2}+\cdots+a_{n-1}\\
a_{2}a_{3}+a_{2}a_{4}+\cdots+a_{n-1}a_{n}&a_{1}a_{3}+a_{1}a_{4}+\cdots+a_{n-1}a_{n}&\cdots&a_{1}a_{2}+a_{1}a_{3}+\cdots+a_{n-2}a_{n-1}\\
\vdots&\vdots&\ddots&\vdots\\
a_{2}a_{3}\dots a_{n-1}a_{n}&a_{1}a_{3}\dots a_{n-1}a_{n}&\cdots&a_{1}a_{2}\dots a_{n-2}a_{n-1}
\end{array}
\right|$}
\]
\[
=a_{1}a_{2}\dots a_{n}\prod\limits_{1\le i<k\le n}\left(a_{i}-a_{k}\right)=\prod\limits_{1\le i<k\le n+1}\left(a_{i}-a_{k}\right).
\]
Now, we assume that $a_{n+1}\ne0$ and define a function $f:\mathbb{C}\to\mathbb{C}$ by the following:
\[
f\left(x\right)=
\]
\begin{equation}\label{3}
\resizebox{1\hsize}{!}{$
\left|
\begin{array}{ccccc}
1&1&\cdots&1&1\\
a_{2}+a_{3}+\cdots+a_{n}+x&a_{1}+a_{3}+\cdots+a_{n}+x&\cdots&a_{1}+a_{2}+\cdots+a_{n-1}+x&a_{1}+a_{2}+\cdots+a_{n}\\
a_{2}a_{3}+a_{2}a_{4}+\cdots+a_{2}x+a_{3}a_{4}+\cdots+a_{n}x&a_{1}a_{3}+a_{1}a_{4}+\cdots+a_{1}x+a_{3}a_{4}+\cdots+a_{n}x&\cdots&a_{1}a_{2}+a_{1}a_{3}+\cdots+a_{1}x+a_{2}a_{3}+\cdots+a_{n-1}x&a_{1}a_{2}+a_{1}a_{3}+\cdots+a_{n-1}a_{n}\\
\vdots&\vdots&\ddots&\vdots&\vdots\\
a_{2}a_{3}\dots a_{n}x&a_{1}a_{3}\dots a_{n}x&\cdots&a_{1}a_{2}\dots a_{n-1}x&a_{1}a_{2}\dots a_{n-1}a_{n}
\end{array}
\right|$}
\end{equation}
Note that $f$ is the Vieta determinant of the numbers $a_{1},a_{2},\dots,a_{n},x$. It is obvious that the function $f$ is a polynomial whose degree is at most $n$ and it is equal to zero at the points $a_{1},a_{2},\dots a_{n}$. Then, it has the form
\begin{equation}\label{4}
f\left(x\right)=A\left(x-a_{1}\right)\left(x-a_{2}\right)\dots\left(x-a_{n}\right),
\end{equation}
where $A$ is a complex constant. We now find the number $A$. By writing $x=0$ in (\ref{3}) and (\ref{4}), we have the following
\[
A\left(-1\right)^{n}a_{1}a_{2}\dots a_{n}=f\left(0\right)=
\]
\[
\resizebox{1\hsize}{!}{$
\left|
\begin{array}{ccccc}
1&1&\cdots&1&1\\
a_{2}+a_{3}+\cdots+a_{n}&a_{1}+a_{3}+\cdots+a_{n}&\cdots&a_{1}+a_{2}+\cdots+a_{n-1}&a_{1}+a_{2}+\cdots+a_{n}\\
a_{2}a_{3}+a_{2}a_{4}+\cdots+a_{n-1}a_{n}&a_{1}a_{3}+a_{1}a_{4}+\cdots+a_{n-1}a_{n}&\cdots&a_{1}a_{2}+a_{1}a_{3}+\cdots+a_{n-2}a_{n-1}&a_{1}a_{2}+a_{1}a_{3}+\cdots+a_{n-1}a_{n}\\
\vdots&\vdots&\ddots&\vdots&\vdots\\
0&0&\cdots&0&a_{1}a_{2}\dots a_{n-1}a_{n}
\end{array}
\right|$}
\]
\[
\resizebox{1\hsize}{!}{$
=a_{1}a_{2}\dots a_{n}\left|
\begin{array}{ccccc}
1&1&\cdots&1\\
a_{2}+a_{3}+\cdots+a_{n}&a_{1}+a_{3}+\cdots+a_{n}&\cdots&a_{1}+a_{2}+\cdots+a_{n-1}\\
a_{2}a_{3}+a_{2}a_{4}+\cdots+a_{n-1}a_{n}&a_{1}a_{3}+a_{1}a_{4}+\cdots+a_{n-1}a_{n}&\cdots&a_{1}a_{2}+a_{1}a_{3}+\cdots+a_{n-2}a_{n-1}\\
\vdots&\vdots&\ddots&\vdots\\
a_{2}a_{3}\dots a_{n-1}a_{n}&a_{1}a_{3}\dots a_{n-1}a_{n}&\cdots&a_{1}a_{2}\dots a_{n-2}a_{n-1}
\end{array}
\right|$}
\]
\[
=a_{1}a_{2}\dots a_{n}\prod\limits_{1\le i<k\le n}\left(a_{i}-a_{k}\right).
\]
By the assumption $a_{1}a_{2}\dots a_{n}\ne0$, we obtain
\[
A=\left(-1\right)^{n}\prod\limits_{1\le i<k\le n}\left(a_{i}-a_{k}\right).
\]
Hence, we can rewrite the function $f$ as follows:
\[
f\left(x\right)=\left(\left(-1\right)^{n}\prod\limits_{1\le i<k\le n}\left(a_{i}-a_{k}\right)\right)\left(x-a_{1}\right)\left(x-a_{2}\right)\dots\left(x-a_{n}\right).
\]
If we write $x=a_{n+1}$ in the above expression, we have
\[
f\left(a_{n+1}\right)=\left(a_{n+1}-a_{1}\right)\left(a_{n+1}-a_{2}\right)\dots\left(a_{n+1}-a_{n}\right)\left(-1\right)^{n}\prod\limits_{1\le i<k\le n}\left(a_{i}-a_{k}\right)
\]
\[
=\prod\limits_{1\le i<k\le n+1}\left(a_{i}-a_{k}\right).
\]
This completes the proof.
\end{proof}

\begin{corollary}\label{corollary1}
Let $a_{1},a_{2},\dots,a_{n}$ and $c$ be arbitrary elements in an integral domain. Then the Vieta determinants of the elements $a_{1},a_{2},\dots,a_{n}$ and $a_{1}-c,a_{2}-c,\dots,a_{n}-c$ are equal to each other.
\end{corollary}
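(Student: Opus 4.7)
The plan is to invoke Theorem \ref{theorem1} twice and observe that the shift by $c$ cancels out in each pairwise difference. Specifically, by the theorem, the Vieta determinant of $a_1, a_2, \dots, a_n$ equals $\prod_{1 \le i<k \le n}(a_i - a_k)$, while the Vieta determinant of the shifted tuple $a_1 - c, a_2 - c, \dots, a_n - c$ equals
\[
\prod_{1 \le i<k \le n}\bigl((a_i - c) - (a_k - c)\bigr) = \prod_{1 \le i<k \le n}(a_i - a_k),
\]
which is identical to the first product. This gives the corollary in one line.

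The only real subtlety to flag is that Theorem \ref{theorem1} was stated and proved for complex numbers, whereas the corollary is asserted over an arbitrary integral domain. I would resolve this by noting that the determinant formula is a polynomial identity in the indeterminates $a_1, \dots, a_n$ with integer coefficients: the determinant on the left and the product on the right are both elements of $\mathbb{Z}[a_1, \dots, a_n]$, and Theorem \ref{theorem1} shows that they agree when specialized at every point of $\mathbb{C}^n$. Since $\mathbb{C}$ is infinite, agreement at every complex point forces equality as formal polynomials, and any polynomial identity with integer coefficients remains valid after specializing the $a_i$ into any commutative ring, in particular into any integral domain. After that remark, the cancellation argument above applies verbatim.

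The main (and really only) obstacle is this transition from $\mathbb{C}$ to a general integral domain, and it is handled by the polynomial-identity argument rather than by redoing the inductive proof. Once that is in place, the conclusion is immediate from the invariance of each factor $a_i - a_k$ under a common translation.
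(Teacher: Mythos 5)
Your proof is correct and matches the intended argument: the paper states this corollary without proof, treating it as an immediate consequence of Theorem \ref{theorem1} together with the observation that $(a_i-c)-(a_k-c)=a_i-a_k$. Your added step---justifying the passage from $\mathbb{C}$ to an arbitrary integral domain by noting that both sides are polynomials in $\mathbb{Z}[a_1,\dots,a_n]$ agreeing on the infinite set $\mathbb{C}^n$, hence equal as formal polynomials and therefore valid in any commutative ring---is a genuine point of rigor that the paper silently skips, and it is exactly the right way to close that gap.
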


\section{Some Applications of Vieta Matrix}
\label{applications}

\subsection{Wronskian of some polynomials}

Let $a_{1},a_{2},\dots,a_{n}\in\mathbb{C}$. Consider the polynomials
\[
\begin{array}{l}
p_{1}\left(x\right)=\left(x-a_{2}\right)\left(x-a_{3}\right)\dots\left(x-a_{n}\right),\\
p_{2}\left(x\right)=\left(x-a_{1}\right)\left(x-a_{3}\right)\dots\left(x-a_{n}\right),\\
\vdots\\
p_{n}\left(x\right)=\left(x-a_{1}\right)\left(x-a_{2}\right)\dots\left(x-a_{n-1}\right).
\end{array}
\]
We now find the Wronskian of these polynomials, i.e. calculate the following
\begin{equation}\label{5}
W\left[p_{1},p_{2},\dots,p_{n}\right]\left(x\right)=
\left|
\begin{array}{cccc}
p_{1}\left(x\right)&p_{2}\left(x\right)&\cdots&p_{n}\left(x\right)\\
p_{1}^{\prime}\left(x\right)&p_{2}^{\prime}\left(x\right)&\cdots&p_{n}^{\prime}\left(x\right)\\
\vdots&\vdots&\ddots&\vdots\\
p_{1}^{\left(n-1\right)}\left(x\right)&p_{2}^{\left(n-1\right)}\left(x\right)&\cdots&p_{n}^{\left(n-1\right)}\left(x\right)
\end{array}
\right|.
\end{equation}
Note that the polynomials $p_{1},p_{2},\dots,p_{n}$ can be rewritten as follows:
\begin{equation}\label{6}
\resizebox{1\hsize}{!}{$
\begin{array}{l}
p_{1}\left(x\right)=x^{n-1}-\left(a_{2}+a_{3}+\cdots+a_{n}\right)x^{n-2}+\left(a_{2}a_{3}+a_{2}a_{4}+\cdots+a_{n-1}a_{n}\right)x^{n-3}+\cdots+\left(-1\right)^{n-1}a_{2}a_{3}\dots a_{n},\\
p_{2}\left(x\right)=x^{n-1}-\left(a_{1}+a_{3}+\cdots+a_{n}\right)x^{n-2}+\left(a_{1}a_{3}+a_{1}a_{4}+\cdots+a_{n-1}a_{n}\right)x^{n-3}+\cdots+\left(-1\right)^{n-1}a_{1}a_{3}\dots a_{n},\\
\vdots\\
p_{n}\left(x\right)=x^{n-1}-\left(a_{1}+a_{2}+\cdots+a_{n-1}\right)x^{n-2}+\left(a_{1}a_{2}+a_{1}a_{3}+\cdots+a_{n-2}a_{n-1}\right)x^{n-3}+\cdots+\left(-1\right)^{n-1}a_{1}a_{2}\dots a_{n-1}.
\end{array}
$}
\end{equation}
Furthermore, each of the functions $p_{1},p_{2},\dots,p_{n}$ is a solution of the linear differential equation $y^{n}=0$. Then, the Wronskian $W\left[p_{1},p_{2},\dots,p_{n}\right]\left(x\right)$ is independent of the variable $x$ by Abel's identity, see \cite{boyce}. So, it is equal to the value $W\left[p_{1},p_{2},\dots,p_{n}\right]\left(0\right)$, i.e.
\begin{equation}\label{7}
W\left[p_{1},p_{2},\dots,p_{n}\right]\left(x\right)=W\left[p_{1},p_{2},\dots,p_{n}\right]\left(0\right).
\end{equation}
By the relations (\ref{5}), (\ref{6}), (\ref{7}) and, Theorem \ref{theorem1} we have
\[
W\left[p_{1},p_{2},\dots,p_{n}\right]\left(x\right)=
\]
\[
\resizebox{1\hsize}{!}{$
\left|
\begin{array}{cccc}
\left(-1\right)^{n-1}a_{2}a_{3}\dots a_{n-1}a_{n}&\left(-1\right)^{n-1}a_{1}a_{3}\dots a_{n-1}a_{n}&\cdots&\left(-1\right)^{n-1}a_{1}a_{2}\dots a_{n-2}a_{n-1}\\
\vdots&\vdots&\ddots&\vdots\\
-\left(n-2\right)!\left(a_{2}+a_{3}+\cdots+a_{n}\right)&-\left(n-2\right)!\left(a_{1}+a_{3}+\cdots+a_{n}\right)&\cdots&-\left(n-2\right)!\left(a_{1}+a_{2}+\cdots+a_{n-1}\right)\\
\left(n-1\right)!&\left(n-1\right)!&\cdots&\left(n-1\right)!
\end{array}
\right|$}
\]
\[
\resizebox{1\hsize}{!}{$
=\prod\limits_{k=0}^{n-1}k!\left|
\begin{array}{cccc}
1&1&\cdots&1\\
a_{2}+a_{3}+\cdots+a_{n}&a_{1}+a_{3}+\cdots+a_{n}&\cdots&a_{1}+a_{2}+\cdots+a_{n-1}\\
\vdots&\vdots&\ddots&\vdots\\
a_{2}a_{3}\dots a_{n-1}a_{n}&a_{1}a_{3}\dots a_{n-1}a_{n}&\cdots&a_{1}a_{2}\dots a_{n-2}a_{n-1}
\end{array}
\right|$}
\]
\[
=\prod\limits_{k=0}^{n-1}k!\cdot\prod\limits_{1\le i<k\le n}\left(a_{i}-a_{k}\right).
\]

\subsection{Jacobian of some functions}

Consider the functions
\[
\begin{array}{l}
f_{1}\left(x_{1},x_{2},\dots,x_{n}\right)=x_{1}+x_{2}+\cdots+x_{n},\\
f_{2}\left(x_{1},x_{2},\dots,x_{n}\right)=x_{1}x_{2}+x_{1}x_{3}+\cdots+x_{n-1}x_{n},\\
\vdots\\
f_{n}\left(x_{1},x_{2},\dots,x_{n}\right)=x_{1}x_{2}\dots x_{n}.
\end{array}
\]
If we calculate the Jacobian of the functions $f_{1},f_{2},\dots,f_{n}$ by using Theorem \ref{theorem1}, we have
\[
\left|
\begin{array}{cccc}
\displaystyle{\frac{\partial f_{1}}{\partial x_{1}}}&\displaystyle{\frac{\partial f_{1}}{\partial x_{2}}}&\cdots&\displaystyle{\frac{\partial f_{1}}{\partial x_{n}}}\\
\displaystyle{\frac{\partial f_{2}}{\partial x_{1}}}&\displaystyle{\frac{\partial f_{2}}{\partial x_{2}}}&\cdots&\displaystyle{\frac{\partial f_{2}}{\partial x_{n}}}\\
\vdots&\vdots&\ddots&\vdots\\
\displaystyle{\frac{\partial f_{n}}{\partial x_{1}}}&\displaystyle{\frac{\partial f_{n}}{\partial x_{2}}}&\cdots&\displaystyle{\frac{\partial f_{n}}{\partial x_{n}}}
\end{array}
\right|
\]
\[
=\left|
\begin{array}{cccc}
1&1&\cdots&1\\
x_{2}+x_{3}+\cdots+x_{n}&x_{1}+x_{3}+\cdots+x_{n}&\cdots&x_{1}+x_{2}+\cdots+x_{n-1}\\
\vdots&\vdots&\ddots&\vdots\\
x_{2}x_{3}\dots x_{n-1}x_{n}&x_{1}x_{3}\dots x_{n-1}x_{n}&\cdots&x_{1}x_{2}\dots x_{n-2}x_{n-1}
\end{array}
\right|
\]
\[
=\prod\limits_{1\le i<k\le n}\left(x_{i}-x_{k}\right).
\]




\begin{thebibliography}{9}

\bibitem{boyce}Boyce~WE, DiPrima~RC. Elementary differential equations and boundary value problems. New York (NY): Wiley; 1986.

\end{thebibliography}





\end{document}